\newtheorem{theorem}{Theorem}[section]
\newtheorem{lemma}[theorem]{Lemma}
\theoremstyle{definition}
\newtheorem{remark}[theorem]{Remark}
\begin{document}

\title[Generalized Nowicki conjecture]
{Generalized Nowicki Conjecture}
\author[Vesselin Drensky]
{Vesselin Drensky}
\address{Institute of Mathematics and Informatics,
Bulgarian Academy of Sciences,
1113 Sofia, Bulgaria}
\email{drensky@math.bas.bg}

\subjclass[2010]{13N15; 13P10; 13E15.}
\keywords{algebras of constants; elementary derivations; Nowicki conjecture; Gr\"obner bases; presentation of algebra.}

\begin{abstract}
Let $B$ be an integral domain over a field $K$ of characteristic 0.
The derivation $\delta$ of $B[Y_d]=B[y_1,\ldots,y_d]$ is elementary if $\delta(B)=0$ and $\delta(y_i)\in B$, $i=1,\ldots,d$.
Then the elements $u_{ij}=\delta(y_i)y_j-\delta(y_j)y_i$, $1\leq i<j\leq d$, belong to the algebra $B[Y_d]^{\delta}$
of constants of $\delta$ and it is a natural question whether the $B$-algebra $B[Y_d]^{\delta}$ is generated by all $u_{ij}$.
In this paper we consider the special case of $B=K[X_d]=K[x_1,\ldots,x_d]$.
If $\delta(y_i)=x_i$, $i=1,\ldots,d$, this is the Nowicki conjecture
from 1994 which was confirmed in several papers based on different methods. The case
$\delta(y_i)=x_i^{n_i}$, $n_i>0$, $i=1,\ldots,d$, was handled by Khoury in the first proof of the
Nowicki conjecture given by him in 2004.
As a consequence of the proof of Kuroda in 2009 if
$\delta(y_i)=f_i(x_i)$, for any nonconstant polynomials $f_i(x_i)$, $i=1,\ldots,d$,
then $B[Y_d]^{\delta}=K[X_d,Y_d]^{\delta}$ is generated by $X_d$ and $U_d=\{u_{ij}=f_i(x_i)y_j-y_if_j(x_j)\mid 1\leq i<j\leq d\}$.
In the present paper we have found a presentation of the algebra
\[
K[X_d,Y_d]^{\delta}=K[X_d,U_d\mid R=S=0],
\]
\[
R=\{r(i,j,k,l)\mid 1\leq i<j<k<l\leq d\}, S=\{s(i,j,k)\mid 1\leq i<j<k\leq d\},
\]
and a basis of $K[X_d,Y_d]^{\delta}$ as a vector space. As a corollary we have shown that the defining relations
$R\cup S$ form the reduced Gr\"obner basis of the ideal which they generate with respect to a specific admissible order.
This is an analogue of the result of Makar-Limanov and the author in their proof of the Nowicki conjecture in 2009.
\end{abstract}

\maketitle

\section{Introduction}
In the present paper we consider only commutative algebras over a field $K$ of characteristic 0.
A linear operator $\delta$ of an algebra $A$ is a derivation if
it satisfies the Leibniz rule
\[
\delta(a_1a_2)=\delta(a_1)a_2+a_1\delta(a_2),\quad a_1,a_2\in A.
\]
The kernel $A^{\delta}$ of $\delta$ is the algebra of constants of $\delta$.
Let $B$ be an integral domain over $K$ and let $A=B[Y_d]=B[y_1,\ldots,y_d]$.
The derivation $\delta$ of $A$ is elementary if $\delta(B)=0$ and $\delta(y_i)\in B$, $i=1,\ldots,d$.
Then the determinants
\begin{equation}\label{constants determinants}
u_{ij}=\left\vert\begin{matrix}\delta(y_i)&\delta(y_j)\\
y_i&y_j\\
\end{matrix}\right\vert=\delta(y_i)y_j-\delta(y_j)y_i,\quad 1\leq i<j\leq d,
\end{equation}
belong to $B[Y_d]^{\delta}$ and it is a natural question whether the $B$-algebra $B[Y_d]^{\delta}$ is generated by
the elements (\ref{constants determinants}).
In the sequel we assume that $B=K[X_d]=K[x_1,\ldots,x_d]$ (and $\delta(B)=0$).
In the special case
\begin{equation}\label{Nowicki derivation}
\delta(y_i)=x_i,\quad i=1,\ldots,d,
\end{equation}
the finite generation of $A^{\delta}=B[Y_d]^{\delta}=K[X_d,Y_d]^{\delta}$ follows
from a more general result of Weitzenb\"ock \cite{W} in 1932.
In 1994 Nowicki \cite{N1} conjectured that for $\delta$ from (\ref{Nowicki derivation})
the algebra $K[X_d,Y_d]^{\delta}$ is generated by $X_d$ and
\[
u_{ij}=x_iy_j-x_jy_i,\quad 1\leq i<j\leq d.
\]
This was confirmed in several papers based on different methods, see, e.g., \cite{Ku} and \cite{D} for details.

In the first proof of the Nowicki conjecture given in his Ph.D. thesis Khoury \cite{K1, K3} made one more step and
established a result which gives an answer to a generalization of the Nowicki conjceture.
If $n_i$, $i=1,\ldots,d$, are positive integers,
and the derivation $\delta$ of $B[Y_d]=K[X_d,Y_d]$ is defined by
\[
\delta(y_i)=x_i^{n_i},\quad i=1,\ldots,d,
\]
then the algebra $K[X_d,Y_d]^{\delta}$ of constants of $\delta$ is generated again by $X_d$ and 
\[
u_{ij}=x_i^{n_i}y_j-x_j^{n_j}y_i,\quad 1\leq i<j\leq d.
\]
The most general result in this direction belongs to Kuroda \cite{Ku}.

\begin{theorem}
\label{theorem of Kuroda}
Let $B$ be an integral domain over $K$ and let $\delta$ be an elementary
derivation of $B[Y_d]$ such that $f_i=\delta(y_i)$, $i=1,\ldots,d$, are algebraically independent over $K$.
If $B$ is flat over $K[F_d]=K[f_1,\ldots,f_d]$, then the $B$-algebra $B[Y_d]^{\delta}$ is generated by
$u_{ij}=f_iy_j-f_jy_i$, $1\leq i<j\leq d$.
\end{theorem}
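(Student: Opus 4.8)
The plan is to deduce the theorem from the classical Nowicki conjecture by a flat base change argument. First I would set $R=K[F_d]=K[f_1,\dots,f_d]$; since $f_1,\dots,f_d$ are algebraically independent over $K$, this is a genuine polynomial ring, sitting inside $B$ via $f_i\mapsto f_i$. Let $\delta_0$ be the derivation of $R[Y_d]=K[f_1,\dots,f_d,y_1,\dots,y_d]$ determined by $\delta_0(f_i)=0$ and $\delta_0(y_i)=f_i$; after renaming the $f_i$ as indeterminates this is exactly the Nowicki derivation, so by the Nowicki conjecture (now a theorem) the algebra $R[Y_d]^{\delta_0}$ is generated as an $R$-algebra by the elements $u_{ij}^{0}=f_iy_j-f_jy_i$, $1\le i<j\le d$.

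The next step is to recognize that $B[Y_d]$ is obtained from $R[Y_d]$ by base change: there is a canonical isomorphism of $B$-algebras $B[Y_d]\cong B\otimes_R R[Y_d]$, and under it $\delta$ corresponds to $\mathrm{id}_B\otimes\delta_0$. Indeed $\delta(B)=0$ forces $\delta(bh)=b\,\delta(h)$, so $\delta$ is $B$-linear, and it agrees with $\mathrm{id}_B\otimes\delta_0$ on the algebra generators $y_1,\dots,y_d$ and on $B$, hence everywhere. Now $\delta_0$ kills $R$, so it is an $R$-linear endomorphism of $R[Y_d]$, and we have an exact sequence of $R$-modules
\[
0\longrightarrow R[Y_d]^{\delta_0}\longrightarrow R[Y_d]\xrightarrow{\ \delta_0\ } R[Y_d].
\]
Here is where the hypothesis is used: since $B$ is flat over $R=K[F_d]$, tensoring this sequence with $B$ keeps it exact, so
\[
B\otimes_R R[Y_d]^{\delta_0}=\ker\bigl(\mathrm{id}_B\otimes\delta_0\bigr)=B[Y_d]^{\delta},
\]
and moreover the inclusion $R[Y_d]^{\delta_0}\hookrightarrow R[Y_d]$ stays injective after $\otimes_R B$. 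Thus $B[Y_d]^{\delta}$ is realized inside $B[Y_d]$ precisely as the $B$-subalgebra generated by the image of $R[Y_d]^{\delta_0}$.

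Finally I would combine the two steps: the image of $R[Y_d]^{\delta_0}$ in $B[Y_d]$ is the $R$-subalgebra generated by the images of the $u_{ij}^{0}$, which are exactly the elements $u_{ij}=f_iy_j-f_jy_i$; hence the $B$-subalgebra it generates is $B[u_{ij}:1\le i<j\le d]$, and the theorem follows. In this route essentially all the work is hidden in the Nowicki conjecture, which supplies the base case $B=K[F_d]$; the passage to an arbitrary flat $B$ is soft homological algebra, and I expect the only delicate points to be bookkeeping — checking that the base-change isomorphism intertwines the derivations, and that flatness is applied to the kernel of the $R$-linear map $\delta_0$ rather than of a ring homomorphism. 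An alternative, more hands-on route avoiding an explicit appeal to Nowicki would be to localize at $f_1$ and use the substitution $z_i=y_i-(f_i/f_1)y_1$ (so that $\delta(z_i)=0$ and $f_1z_i=u_{1i}$), which shows at once that $B[Y_d]^{\delta}$ and $B[u_{ij}]$ coincide after inverting $f_1$; the remaining point, that no power of $f_1$ is genuinely needed in the denominator, is then the real obstacle, and it is again settled by the same flatness argument applied degree by degree in $Y_d$, where $\delta$ is a $B$-linear map of free $B$-modules obtained by base change from $K[F_d]$.
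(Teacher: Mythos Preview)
The paper does not give its own proof of this statement: Theorem~\ref{theorem of Kuroda} is quoted as Kuroda's result \cite{Ku} and used as a black box (in the proof of Theorem~\ref{generalized conjecture}), so there is nothing in the paper to compare your argument against.

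That said, your argument is correct and is essentially Kuroda's own proof. The key points --- identifying $B[Y_d]$ with $B\otimes_{K[F_d]}K[F_d][Y_d]$, recognizing $\delta$ as $\mathrm{id}_B\otimes\delta_0$, and applying flatness of $B$ over $K[F_d]$ to the exact sequence $0\to K[F_d][Y_d]^{\delta_0}\to K[F_d][Y_d]\xrightarrow{\delta_0}K[F_d][Y_d]$ --- are exactly the ingredients of Kuroda's short proof. The only cosmetic difference is that you invoke the Nowicki conjecture for the base case $B=K[F_d]$, whereas Kuroda's paper is itself one of the proofs of that conjecture, so he supplies that base case directly before doing the flat base change. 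Your alternative sketch via localization at $f_1$ is also sound, and you correctly locate the genuine content (eliminating the denominator) in the same flatness argument.
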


It seems that it is difficult to find further generalizations.
Khoury \cite{K2} showed that the algebra of constants of the elementary derivation $\delta$ of $B[Y_4]$, $B=K[X_3]$,
for $F_4=\{x_1^2,x_2^2,x_3^2,x_2x_3\}$ is finitely generated but cannot be generated by expressions which are linear in $Y_4$.
Also, many of the modern counterexamples to the Fourteenth Hilbert problem are constructed in terms of elementary derivations,
see, e.g., the surveys by Freudenburg \cite{F} and Nowicki \cite{N2}.

It is easy to see that Theorem \ref{theorem of Kuroda} holds for the elementary derivation $\Delta$ of $B[Y_d]$, $B=K[X_d]$, and
\[
\Delta(y_i)=f_i(x_i),\quad i=1,\ldots,d,
\]
where $f_i(x_i)$ is a nonconstant polynomial in the variable $x_i$, $i=1,\ldots,d$.
But there is an essential difference between the Nowicki conjecture and this result.
The Nowicki conjecture is equivalent to a statement of classical invariant theory.
When the polynomials $f_i(x_i)$ are not linear, we cannot see how to restate the result in the language of invariant theory.
In the present paper we apply the methods developed in our proof with Makar-Limanov \cite{DML} of the Nowicki conjecture
and find a presentation of the algebra $K[X_d,Y_d]^{\Delta}$ and a basis of $K[X_d,Y_d]^{\Delta}$ as a vector space.
As a consequence we show that our defining relations
form the reduced Gr\"obner basis of the ideal which they generate with respect to a specific admissible order.

\section{Generators of the algebra of constants}

We shall need the following easy lemma. We include the proof for self-containedness of the exposition.

\begin{lemma}\label{free module}
Let $f_i=f_i(x_i)$, $i=1,\ldots,d$, be nonconstant polynomials.
Then $K[X_d]$ is a free $K[f_1,\ldots,f_d]$-module.
\end{lemma}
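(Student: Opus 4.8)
The plan is to reduce everything to the one-variable situation and then pass to $d$ variables by a tensor product argument.

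\emph{Step 1: the case $d=1$.} Let $f=f(x)\in K[x]$ be nonconstant, say $\deg f=n\geq 1$. I would show that $K[x]$ is a free $K[f]$-module with explicit basis $1,x,\ldots,x^{n-1}$. That these elements span $K[x]$ over $K[f]$ follows by a division argument: writing $f=\alpha_nx^n+\cdots+\alpha_0$ with $\alpha_n\neq 0$, any $g\in K[x]$ of $x$-degree $\geq n$ can be lowered in degree by subtracting a suitable element of $K[f]\cdot x^{j}$ (namely a monomial in $f$ times $x^{j}$ with $j<n$, obtained from the leading term of $g$), so induction on $\deg_x g$ gives $g\in\sum_{i=0}^{n-1}K[f]x^i$. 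For $K[f]$-linear independence, a nontrivial relation $\sum_{i=0}^{n-1}c_i(f)x^i=0$ with $c_i\in K[f]$ not all zero is impossible, because the nonzero summands $c_i(f(x))x^i$ have $x$-degrees lying in pairwise distinct residue classes modulo $n$ and hence cannot cancel. (Alternatively, one may invoke the structure theorem: $K[f]\cong K[t]$ is a PID, $K[x]$ is a finitely generated torsion-free $K[f]$-module, hence free, of rank $[K(x):K(f)]=n$; but I would keep the explicit basis, since it reappears later in the paper.)

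\emph{Step 2: from one to $d$ variables.} Since $f_i=f_i(x_i)$ involves only the variable $x_i$, there are natural $K$-algebra isomorphisms
\[
K[X_d]\cong K[x_1]\otimes_K\cdots\otimes_K K[x_d],\qquad
K[f_1,\ldots,f_d]\cong K[f_1]\otimes_K\cdots\otimes_K K[f_d],
\]
compatible with the inclusion $K[f_1,\ldots,f_d]\hookrightarrow K[X_d]$. A tensor product over the ground field $K$ of free modules is again free, with basis the set of tensor products of the given bases. Combining this with Step~1, $K[X_d]$ is a free $K[f_1,\ldots,f_d]$-module with basis
\[
\{\,x_1^{a_1}\cdots x_d^{a_d}\mid 0\leq a_i\leq \deg f_i-1,\ i=1,\ldots,d\,\}.
\]

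The only step with genuine content is Step~1; Step~2 is purely formal. Even in Step~1 there is no real obstacle—the division argument is elementary—so the main thing to be careful about is bookkeeping of $x$-degrees modulo $n$ to get linear independence, and, in Step~2, checking that the two tensor decompositions are compatible so that a basis of the tensor factors really yields a basis of $K[X_d]$ over $K[f_1,\ldots,f_d]$.
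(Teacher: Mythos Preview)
Your proposal is correct and arrives at exactly the same explicit basis
\[
\{x_1^{a_1}\cdots x_d^{a_d}\mid 0\leq a_i<\deg f_i\}
\]
as the paper. The organizational difference is that the paper works directly in $d$ variables: it shows spanning by the same division argument and then proves linear independence in one stroke, observing that the leading monomial (lexicographically) of $x_1^{r_1}f_1^{n_1}\cdots x_d^{r_d}f_d^{n_d}$ is $x_1^{n_1m_1+r_1}\cdots x_d^{n_dm_d+r_d}$, and that these are pairwise distinct. Your Step~1 is exactly the $d=1$ specialization of this leading-term argument (degrees in distinct residue classes modulo $n$), and your Step~2 replaces the multivariable leading-term count by the tensor-product formalism. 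Both routes are equally short; the paper's version avoids having to check compatibility of the tensor decompositions, while yours isolates the single nontrivial step more cleanly.
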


\begin{proof}
Let $m_i=\deg(f_i)$ be the degree of $f_i$. Every monomial $x_i^m$ can be written as a linear combination of
polynomials $q_n(x_i)f_i^n(x_i)$, $\deg(q_n)<m_i$, $n=0,1,2,\ldots$, and hence as a $K[f_1,\ldots,f_d]$-module
$K[X_d]$ is generated by the monomials
\[
x_1^{r_1}\cdots x_d^{r_d},\quad 0\leq r_i<m_i,\quad i=1,\ldots,d.
\]
The leading terms of the products
\begin{equation}\label{leading term lexicographical}
x_1^{r_1}f_1^{n_1}(x_1)\cdots x_d^{r_d}f_d^{n_d}(x_d)
\end{equation}
with respect to the lexicographic order are equal to $x_1^{n_1m_1+r_1}\cdots x_1^{n_dm_d+r_d}$
and are pairwise different. Hence the polynomials (\ref{leading term lexicographical}) are linearly independent
which implies that $K[X_d]$ is a free $K[f_1,\ldots,f_d]$-module.
\end{proof}

\begin{theorem}\label{generalized conjecture}
Let $f_i=f_i(x_i)$, $i=1,\ldots,d$, be nonconstant polynomials in one variable and let $\Delta$
be the derivation of $K[X_d,Y_d]$ defined by
\[
\Delta(y_i)=f_i(x_i),\quad\Delta(x_i)=0,\quad i=1,\ldots,d.
\]
Then the algebra $K[X_d,Y_d]^{\Delta}$ of constants of $\Delta$ is generated by $X_d$ and
the determinants from \text{\rm (\ref{constants determinants})}
\begin{equation}\label{generators for Delta}
u_{ij}=f_i(x_i)y_j-f_j(x_j)y_i,\quad 1\leq i<j\leq d.
\end{equation}
\end{theorem}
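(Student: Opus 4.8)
The plan is to reduce the statement to Theorem~\ref{theorem of Kuroda} by verifying its hypotheses for $B=K[X_d]$, $f_i=f_i(x_i)$, and then accounting for the extra generators $X_d$. First I would observe that the polynomials $f_1(x_1),\ldots,f_d(x_d)$ are algebraically independent over $K$: since $f_i$ involves only the variable $x_i$ and the $x_i$ are algebraically independent, any polynomial relation among the $f_i$ would, after substituting, force a nontrivial relation among the $x_i$ (one checks this via the lexicographic leading terms exactly as in the proof of Lemma~\ref{free module}, or by a transcendence-degree count). Next, I would invoke Lemma~\ref{free module}: it tells us $K[X_d]$ is a free $K[f_1,\ldots,f_d]$-module, and a free module is in particular flat. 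Thus both hypotheses of Theorem~\ref{theorem of Kuroda} are satisfied with $B=K[X_d]$.

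Applying Theorem~\ref{theorem of Kuroda} then gives that the $B$-algebra $B[Y_d]^{\Delta}=K[X_d,Y_d]^{\Delta}$ is generated over $B=K[X_d]$ by the determinants $u_{ij}=f_i(x_i)y_j-f_j(x_j)y_i$, $1\le i<j\le d$. Being generated over $K[X_d]$ by the $u_{ij}$ means precisely being generated as a $K$-algebra by $X_d=\{x_1,\ldots,x_d\}$ together with the set $U_d=\{u_{ij}\}$, which is the assertion of Theorem~\ref{generalized conjecture}. It remains only to confirm the small point flagged in the excerpt, namely that ``Theorem~\ref{theorem of Kuroda} holds for $\Delta$'' — i.e.\ that each $u_{ij}$ really lies in the kernel of $\Delta$; this is the direct computation $\Delta(u_{ij})=\Delta(f_i(x_i)y_j-f_j(x_j)y_i)=f_i(x_i)f_j(x_j)-f_j(x_j)f_i(x_i)=0$, using $\Delta(x_i)=0$ and the Leibniz rule, and it is already recorded in \eqref{constants determinants} that these determinants are constants.

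The only real content beyond citing Kuroda's theorem is the flatness input, and that is exactly what Lemma~\ref{free module} was inserted to supply; so I do not expect a genuine obstacle here. If one wanted a self-contained argument avoiding Theorem~\ref{theorem of Kuroda}, the main obstacle would be reproving Kuroda's descent: one would filter $K[X_d,Y_d]$ by $Y_d$-degree, note that a constant of $\Delta$ of top $Y_d$-degree $n$ has leading form a constant of the associated graded derivation, and then use the freeness of $K[X_d]$ over $K[F_d]$ to split off a $K[X_d]$-linear combination of monomials in the $u_{ij}$ matching that leading form, reducing the $Y_d$-degree and inducting. But since the excerpt explicitly permits us to assume Theorem~\ref{theorem of Kuroda}, the short route above is the intended proof, and the write-up is essentially the two displayed verifications plus the one-line flatness remark.
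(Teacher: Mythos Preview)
Your proposal is correct and follows essentially the same route as the paper: verify algebraic independence of the $f_i$, use Lemma~\ref{free module} to get freeness (hence flatness) of $K[X_d]$ over $K[f_1,\ldots,f_d]$, and then apply Theorem~\ref{theorem of Kuroda}. The only cosmetic difference is that the paper routes the flatness step through Eisenbud's characterization of flat finitely generated modules, whereas you simply note that free implies flat, which is if anything more direct.
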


\begin{proof} By \cite[Corollary 6.6, p. 165]{E} if
a $B$-module $M$ is finitely generated then it is flat if and only if it is a summand of a free $B$-module.
Hence by Lemma \ref{free module} $K[X_d]$ is a flat $K[f_1,\ldots,f_d]$-module. Obviously
$f_i(x_i)$, $i=1,\ldots,d$, are algebraically independent over $K$ and Theorem \ref{theorem of Kuroda}
immediately gives that the algebra $K[X_d,Y_d]^{\delta}$ is generated by $X_d$ and the polynomials
(\ref{generators for Delta}).
\end{proof}

\begin{remark}
If some of the polynomials $f_i(x_i)$ in Theorem \ref{generalized conjecture} is a constant, then
the description of $K[X_d,Y_d]^{\Delta}$ is trivial. Let, for example, $f_1(x_1)=\alpha\in K\setminus\{0\}$.
We replace the variables $Y_d$ by $Z_d$, where
\[
z_1=y_1,\quad z_i=\alpha y_i-f_i(x_i)y_1,\quad i=2,\ldots,d.
\]
Then the definition of $\Delta$ becomes
\[
\Delta(z_1)=\alpha,\quad \Delta(z_i)=0,\quad i=2,\ldots,d,\quad \Delta(X_d)=0.
\]
Since $\alpha\not=0$, we obtain that $K[X_d,Z_d]^{\Delta}=K[X_d,z_2,\ldots,z_d]$.
\end{remark}

\section{The main result}

In this section we follow our paper with Makar-Limanov \cite{DML}
and use the methods developed there. Since we shall work with Gr\"obner bases,
we refer, e.g., to the book by Adams and Loustaunau \cite{AL} for a background on the topic.
We fix the degrees $m_1,\ldots,m_d$ of the polynomials $f_1(x_1),\ldots,f_d(x_d)$
and the set
\[
U_d=\{u_{ij}\mid 1\leq i<j\leq d\},
\]
where the elements $u_{ij}$ are defined in (\ref{generators for Delta}).

\begin{lemma}\label{relations between generators}
The subsets $X_d$ and $U_d$ of $K[X_d,Y_d]$ satisfy the relations $R=S=0$,
where
\begin{equation}\label{relations between U}
R=\{r(i,j,k,l)= u_{ij}u_{kl} - u_{ik}u_{jl}+ u_{il}u_{jk}\mid
1\leq i<j<k<l\leq d\},
\end{equation}
\begin{equation}\label{relations between X and U}
S=\{s(i,j,k)= f_i(x_i)u_{jk} - f_j(x_j)u_{ik} + f_k(x_k)u_{ij}\mid 1\leq i<j<k\leq d\}.
\end{equation}
\end{lemma}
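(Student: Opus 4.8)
The plan is to verify both families of relations by direct substitution of the definitions $u_{ij}=f_iy_j-f_jy_i$, where I abbreviate $f_i=f_i(x_i)$. This is a routine algebraic identity check, but it is worth organizing it so that the structure is visible rather than hidden in expansion.

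For the relations $S$, I would substitute into $s(i,j,k)=f_iu_{jk}-f_ju_{ik}+f_ku_{ij}$ and observe that the result is
\[
f_i(f_jy_k-f_ky_j)-f_j(f_iy_k-f_ky_i)+f_k(f_iy_j-f_jy_i).
\]
Grouping the six terms by the variable $y_*$ appearing in them, the coefficient of $y_k$ is $f_if_j-f_jf_i=0$, the coefficient of $y_j$ is $-f_if_k+f_kf_i=0$, and the coefficient of $y_i$ is $f_jf_k-f_kf_j=0$ (here commutativity of $K[X_d]$ is used), so $s(i,j,k)=0$ identically.

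For the relations $R$, the cleanest route is to recognize $r(i,j,k,l)=u_{ij}u_{kl}-u_{ik}u_{jl}+u_{il}u_{jk}$ as the Pl\"ucker-type quadratic relation among $2\times 2$ minors of the $2\times d$ matrix whose columns are $\binom{f_m}{y_m}$. Concretely, if $v_m=(f_m,y_m)$ denotes the $m$-th column and $u_{ij}=\det(v_i,v_j)$, then the Grassmann–Pl\"ucker three-term relation gives $\det(v_i,v_j)\det(v_k,v_l)-\det(v_i,v_k)\det(v_j,v_l)+\det(v_i,v_l)\det(v_j,v_k)=0$. One can prove this in the elementary $2$-row case simply by expanding: each side is a polynomial in the entries $f_i,f_j,f_k,f_l,y_i,y_j,y_k,y_l$, and collecting terms (for instance by the monomials in the $f$'s times monomials in the $y$'s, or by invoking multilinearity and alternating properties of $2\times 2$ determinants) shows the alternating sum vanishes. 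I would present this either as an explicit expansion or by the remark that the $2\times 3$ matrix $(v_i\mid v_j\mid v_k)$ has a vanishing $3\times 3$ "determinant" in the formal sense that yields exactly $r(i,j,k)$-type cancellations—but since we are in two rows, the honest proof is the short expansion. Finally, these computations take place inside $K[X_d,Y_d]^{\Delta}$ because each $u_{ij}$ lies there, so the relations indeed hold among the stated generators.

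There is no real obstacle here: both $R$ and $S$ are formal identities, and the only thing to be careful about is bookkeeping in the expansion of $R$, which involves twelve monomial terms that cancel in pairs. The substance of the paper—showing that $R\cup S$ is a \emph{complete} set of defining relations and forms a reduced Gr\"obner basis—comes later; this lemma only records that the relations are valid, which is the easy half.
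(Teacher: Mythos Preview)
Your proof is correct and essentially matches the paper's approach: both treat the lemma as a routine verification of determinantal identities. The only cosmetic difference is that the paper packages both families uniformly as determinants with repeated rows---$s(i,j,k)$ as a $3\times 3$ determinant with the first two rows equal (expansion along the first row yields $s(i,j,k)$), and $r(i,j,k,l)$ as a $4\times 4$ determinant with two repeated row-pairs (Laplace expansion along the first two rows yields the Pl\"ucker three-term relation)---whereas you do $S$ by direct expansion and invoke Pl\"ucker for $R$; the underlying computation is the same.
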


\begin{proof}
The annihilating of the relations (\ref{relations between U}) and (\ref{relations between X and U})
in $K[X_d,Y_d]$ can be verified directly.
Instead, the expansions of the determinants
\[
r(i, j, k, l) = \left\vert
\begin{array}{cccc}
 f_i(x_i) & f_j(x_j) & f_k(x_k) & f_l(x_l) \\
 y_i & y_j & y_k & y_l \\
 f_i(x_i) & f_j(x_j) & f_k(x_k) & f_l(x_l) \\
 y_i & y_j & y_k & y_l \\
 \end{array}
 \right\vert
\]
and
\[
s(i, j, k) = \left\vert
\begin{array}{ccc}
f_i(x_i) & f_j(x_j) & f_k(x_k) \\
f_i(x_i) & f_j(x_j) & f_k(x_k) \\
y_i & y_j & y_k  \\
\end{array}
\right\vert
\]
relative to the first two rows and to the first row give, respectively,
(\ref{relations between U}) and (\ref{relations between X and U}).
\end{proof}

Now we shall work in the polynomial algebra $K[X_d,U_d]$. By Theorem \ref{generalized conjecture}
there is a canonical epimorphism
\begin{equation}\label{epimorphism pi}
\pi:K[X_d,U_d]\to K[X_d,Y_d]^{\Delta}.
\end{equation}
Since there will be no misunderstanding,
we shall use the same symbols $x_i\in X_d$ and $u_{jk}\in U_d$ for the generators of the polynomial algebra $K[X_d,U_d]$
and their images under $\pi$ which generate the algebra of constants $K[X_d,Y_d]^{\Delta}$.

Our first goal is to show that the kernel of $\pi$ is generated by the relations
$r(i,j,k,l)$ and $s(i,j,k)$ from (\ref{relations between U}) and (\ref{relations between X and U}).
We associate with every $u_{ij}\in U_d$ the open interval $(i,j)$. As in \cite{DML}
we define an ordering  of $K[X_d,U_d]$ called degree--interval length--lexicographic order (DILL order).
We order the monomials of $K[X_d,U_d]$ first by the degree in $X_d$ and in $U_d$,
then by the total length of the intervals associated with the participating
variables $u_{ij}$ and finally lexicographically.
If
\[
v=x_{i_1}\cdots x_{i_p}u_{j_1k_1}\cdots u_{j_qk_q}\text{ and }
v'=x_{i'_1}\cdots x_{i'_{p'}}u_{j'_1k'_1}\cdots u_{j'_{q'}k'_{q'}},
\]
where $1\leq i_1\leq\cdots\leq i_p\leq d$, $1\leq j_1\leq\cdots\leq j_q\leq d$ and
$1\leq k_a\leq k_{a+1}\leq d$ if $j_a=j_{a+1}$,
with similar restrictions on $i'_a,j'_b,k'_b$,
we define $v\succ v'$ if

(i) $p>p'$ (we compare the degrees of $v$ and $v'$ in $X_d$);

(ii) $p=p'$ and $q>q'$ (we compare the degrees of $v$ and $v'$ in $U_d$);

(iii) $p=p'$, $q=q'$ and
\[
\sum_{b=1}^q(k_b-j_b)> \sum_{b=1}^q(k'_b-j'_b)
\]
(we compare the total lengths of the intervals associated with $v$ and $v'$);

(iv) $p=p'$, $q=q'$,
\[
\sum_{b=1}^q(k_b-j_b) = \sum_{b=1}^q(k'_b-j'_b)
\]
and $\omega>\omega'$
for the $(p+2q)$-tuples
\[
\omega=(i_1,\ldots, i_p,j_1,\ldots,j_q,k_1,\ldots,q_d),\quad
\omega'=(i'_1,\ldots, i'_p,j'_1,\ldots,j'_q,k'_1,\ldots,k'_q),
\]
where $(a_1,\ldots,a_n) > (b_1,\ldots,b_n)$
if $a_1=b_1,\ldots,a_e=b_e$, $a_{e+1}>b_{e+1}$ for some $e$ (we compare lexicographically $v$ and $v'$).

Obviously the DILL-order is admissible, i.e.,
it is linear, satisfies the descending chain condition, and
if $v\succ v'$ for two monomials $v$ and $v'$,
then $vw\succ v'w$ for all monomials $w$.
For $0\not=f\in K[X_d,U_d]$ we denote by $\overline{f}$ the leading monomial of $f$.

\begin{lemma}\label{spanning set of algebra of constants}
The set of normal words in $K[X_d,U_d]$ with respect to DILL order
and modulo the relations $R\cup S$ consists of the monomials
\begin{equation}\label{normal monomials}
v=x_1^{a_1}\cdots x_d^{a_d}u_{j_1k_1}\cdots u_{j_qk_q}, \quad 1\leq j_b<k_b\leq d,
\end{equation}
such that

{\rm (i)} If $(j_b,k_b)\cap (j_c,k_c)\not=\emptyset$ for two different $u_{j_bk_b}$ and $u_{j_ck_c}$
in {\rm (\ref{normal monomials})},
then one of the intervals $(j_b,k_b)$ and $(j_c,k_c)$ is contained in the other;

{\rm (ii)} If $i\in(j_b,k_b)$ for some $u_{j_bk_b}$ in {\rm (\ref{normal monomials})}, then $a_i<m_i$.

As a vector space the algebra $K[X_d,Y_d]^{\Delta}$ is spanned by the images under $\pi$ of the products
{\rm (\ref{normal monomials})}.
\end{lemma}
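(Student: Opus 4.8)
The plan is to verify that the monomials described in (i)–(ii) are exactly the normal words, and then to deduce the spanning statement. First I would identify the leading monomials of the defining relations with respect to the DILL order. For a relation $r(i,j,k,l)=u_{ij}u_{kl}-u_{ik}u_{jl}+u_{il}u_{jk}$ with $i<j<k<l$, all three terms have the same $X_d$-degree (zero) and the same $U_d$-degree (two), so ties are broken by interval length and then lexicographically. The three monomials have interval-length sums $(j-i)+(l-k)$, $(k-i)+(l-j)$, and $(l-i)+(k-j)$; since $i<j<k<l$ one checks that $u_{ik}u_{jl}$ has the strictly largest interval-length sum (the "crossing" term dominates), so $\overline{r(i,j,k,l)}=u_{ik}u_{jl}$. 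Thus reducing modulo $R$ eliminates exactly the monomials in which two intervals properly cross, which is condition (i): whenever $(j_b,k_b)\cap(j_c,k_c)\neq\emptyset$, either they are nested or they cross, and a crossing pair is reducible. Similarly, for $s(i,j,k)=f_i(x_i)u_{jk}-f_j(x_j)u_{ik}+f_k(x_k)u_{ij}$ with $i<j<k$, each term is homogeneous of $X_d$-degree $m_i$, $m_j$, $m_k$ respectively — but wait, these differ in general, so I would instead observe that after expanding $f_i(x_i)=\sum_t c_{i,t}x_i^t$ the highest $X_d$-degree term in each summand is $c_{i,m_i}x_i^{m_i}u_{jk}$ etc.; I need to argue which monomial is the overall leading one. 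Here the leading monomial is $x_j^{m_j}u_{ik}$ (the middle index supplies the $f$ factor and the outer pair $(i,k)$ the longest interval). Reducing modulo this relation rewrites $x_j^{m_j}u_{ik}$ whenever $i<j<k$, i.e., whenever the "$f$-variable" $x_j$ at power $\geq m_j$ sits strictly inside an interval $(i,k)$ appearing in the word. Iterating, every occurrence of $x_i^{a_i}$ with $a_i\geq m_i$ and $i$ interior to some interval can be reduced, which is exactly condition (ii).

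Next I would prove the converse: a monomial $v$ of the form (\ref{normal monomials}) satisfying (i) and (ii) is not divisible by any leading monomial $\overline{g}$, $g\in R\cup S$. A leading monomial of an element of $R$ is $u_{ik}u_{jl}$ with $i<j<k<l$ — a crossing pair — so divisibility by it would force two crossing intervals in $v$, contradicting (i). A leading monomial of an element of $S$ is $x_j^{m_j}u_{ik}$ with $i<j<k$; divisibility would require $x_j$ to appear in $v$ to power $\geq m_j$ together with a factor $u_{ik}$ having $i<j<k$, i.e. $j$ interior to $(i,k)$, contradicting (ii). One must also make sure there are no other leading monomials arising from $K$-linear combinations or from monomials of lower $X$-degree inside the $s$-relations — but the DILL order first compares $X_d$-degree, so within $s(i,j,k)$ the top-$X$-degree part is a $K$-linear combination of $x_i^{m_i}u_{jk}$, $x_j^{m_j}u_{ik}$, $x_k^{m_k}u_{ij}$ only, and its leading monomial is $x_j^{m_j}u_{ik}$ as claimed. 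Hence the monomials in (i)–(ii) are precisely the normal words.

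Finally, the spanning statement follows from the general theory of Gröbner bases: $K[X_d,U_d]$ is spanned, modulo the ideal $I=(R\cup S)$, by the normal words, so $K[X_d,U_d]/I$ is spanned by the images of the monomials (\ref{normal monomials}) satisfying (i)–(ii). Since $R\cup S\subseteq\ker\pi$ by Lemma \ref{relations between generators}, the epimorphism $\pi$ factors through $K[X_d,U_d]/I$, and therefore $K[X_d,Y_d]^{\Delta}$ is spanned as a $K$-vector space by the images $\pi(v)$ of the monomials (\ref{normal monomials}) with properties (i) and (ii). I expect the main obstacle to be the bookkeeping in identifying the leading monomial of $s(i,j,k)$ correctly and arguing that reduction by the $s$-relations, applied repeatedly, reaches a stable form in which no interior variable carries an exponent $\geq m_i$; here one uses that each reduction step strictly decreases the word in the DILL order (the interval length of $u_{ik}$ exceeds those of $u_{jk}$ and $u_{ij}$, while the $X$-degree is preserved), so the descending chain condition guarantees termination.
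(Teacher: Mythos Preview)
Your overall approach matches the paper's: compute the DILL-leading monomials of the elements of $R\cup S$, characterize the normal words as the monomials not divisible by any of them, and deduce spanning from $R\cup S\subseteq\ker\pi$. The paper simply asserts that the leading monomials are $u_{ik}u_{jl}$ and $x_j^{m_j}u_{ik}$; you attempt to justify these assertions, and that is where the slips occur.

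For $r(i,j,k,l)$ you claim that $u_{ik}u_{jl}$ has the \emph{strictly} largest interval-length sum among the three terms. This is false: $(k-i)+(l-j)=(l-i)+(k-j)=k+l-i-j$, so $u_{ik}u_{jl}$ and $u_{il}u_{jk}$ are tied on interval length and only $u_{ij}u_{kl}$ is strictly smaller; one must still carry out the lexicographic tiebreak in step~(iv) of the DILL order, which you omit. For $s(i,j,k)$ your own hesitation is well founded but your resolution is not: since the DILL order compares $X_d$-degree first and the degrees $m_i,m_j,m_k$ need not coincide, the ``top $X_d$-degree part'' of $s(i,j,k)$ is \emph{not} a linear combination of all three monomials $x_i^{m_i}u_{jk}$, $x_j^{m_j}u_{ik}$, $x_k^{m_k}u_{ij}$ as you write, but only of those whose $m$-value equals $\max(m_i,m_j,m_k)$. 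If, say, $m_i$ is the unique maximum, then under the DILL order as stated the leading monomial of $s(i,j,k)$ is $x_i^{m_i}u_{jk}$ rather than $x_j^{m_j}u_{ik}$, and your derivation of condition~(ii) does not go through. Both points require a genuine fix rather than the routine bookkeeping you anticipate at the end.
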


\begin{proof}
The leading monomials with respect to the DILL order
of $r(i,j,k,l)$ and $s(i,j,k)$ from (\ref{relations between U}) and (\ref{relations between X and U}) are,
respectively,
\begin{equation}\label{leading monomials of relations}
u_{ik}u_{jl},\quad 1\leq i<j<k<l\leq d,\text{ and }x_j^{m_j}u_{ik},\quad 1\leq i<j<k\leq d.
\end{equation}

(i) Let $u_{j_bk_b}u_{j_ck_c}$ divide the monomial $v$, and let $j_b\leq j_c$ and if $j_b=j_c$, then $k_b\leq k_c$.
If the intervals $(j_b,k_b)$ and $(j_c,k_c)$
have a nontrivial intersection and are not contained in each other,
then $j_b<j_c<k_b<k_c$. Hence the monomial $u_{j_bk_b}u_{j_ck_c}$ is the leading term of $r(j_b,j_c,k_b,k_c)$.
In this way $v$ is not a normal word and this proves (i).

(ii) If the monomial $v$ is a normal word
and $x_i^{a_i}u_{j_bk_b}$ divides $v$ with $i\in(j_b,k_b)$, then $x_i^{a_i}u_{j_bk_b}$ is not divisible by
a monomial from (\ref{leading monomials of relations}) and hence $a_i<m_i$. This proves (ii).

Let us take some set of polynomials $W=\{w_1,\ldots,w_n\}\subset K[X_d,U_d]$
which are in the kernel of $\pi$ from (\ref{epimorphism pi}).
The images in $K[X_d,Y_d]^{\Delta}$ of the normal words in $K[X_d,U_d]$
with respect to $W$ span the vector space $K[X_d,Y_d]^{\Delta}$.
By Lemma \ref{relations between generators} the set of relations $R\cup S$
from (\ref{relations between U}) and (\ref{relations between X and U})
belong to the kernel of $\pi$. Hence $K[X_d,Y_d]^{\Delta}$ is spanned by the images of the normal words from
(\ref{normal monomials}).
\end{proof}

\begin{lemma}\label{basis of normal elements}
The images under the epimorphism $\pi$ from {\rm (\ref{epimorphism pi})}
of the normal words {\rm (\ref{normal monomials})} from Lemma {\rm\ref{spanning set of algebra of constants}}
form a basis of the vector space $K[X_d,Y_d]^{\Delta}$.
\end{lemma}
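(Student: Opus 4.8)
The plan is to show that the normal words from \eqref{normal monomials} are linearly independent in $K[X_d,Y_d]^{\Delta}$, since Lemma \ref{spanning set of algebra of constants} already shows they span. I would set up the argument by introducing a convenient monomial order on the image algebra. Each generator maps as $\pi(x_i)=x_i$ and $\pi(u_{jk})=f_j(x_j)y_k-f_k(x_k)y_j$; writing $m_j=\deg f_j$ and normalizing leading coefficients, the term of highest $y$-lex degree in $\pi(u_{jk})$ is (up to scalar) $x_j^{m_j}y_k$ — that is, picking out the $y_k$ with the $f_j(x_j)$ coefficient. So for a normal word $v=x_1^{a_1}\cdots x_d^{a_d}u_{j_1k_1}\cdots u_{j_qk_q}$ with $j_1\le\cdots$, I would track a carefully chosen ``leading term'' of $\pi(v)$ in $K[X_d,Y_d]$ and argue these leading terms are pairwise distinct across distinct normal words.

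The key step is the choice of which term to extract. Following \cite{DML}, for an interval collection satisfying condition (i) of Lemma \ref{spanning set of algebra of constants} (a laminar family), I would orient each $u_{j_bk_b}$ by choosing its $y_{k_b}$-part when $(j_b,k_b)$ is not nested inside another, and more generally process the intervals from outermost to innermost: an outer interval $(j,k)$ contributes $x_j^{m_j}y_k$, and once $x_j^{m_j}$ is ``used'' it blocks no further use because of condition (ii) ($a_i<m_i$ for interior $i$ keeps the $x$-exponents small). Concretely I would define, for each normal word, a monomial $\mu(v)=x_1^{b_1}\cdots x_d^{b_d}y_{c_1}\cdots y_{c_q}$ in $K[X_d,Y_d]$ obtained by this greedy selection, show $\mu(v)$ really occurs in $\pi(v)$ with nonzero coefficient (this uses that no cancellation can occur among the chosen terms, precisely because of the nesting/degree constraints), and show $v\mapsto\mu(v)$ is injective on the set of normal words — one recovers the multiset of right endpoints $\{c_b\}$, then the nesting structure and hence the left endpoints, then the remaining $x$-exponents. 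Injectivity of $\mu$ plus the fact that any nontrivial linear combination $\sum\lambda_v v$ has, among the $v$ with $\lambda_v\ne0$, one whose $\mu(v)$ is maximal in a suitable order on $K[X_d,Y_d]$ and is not hit by the lower terms of the others, forces all $\lambda_v=0$.

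The main obstacle I expect is the ``no cancellation'' claim: when I expand $\pi(v)=\prod_b(f_{j_b}(x_{j_b})y_{k_b}-f_{k_b}(x_{k_b})y_{j_b})\cdot x_1^{a_1}\cdots x_d^{a_d}$, many of the $2^q$ terms could a priori collide with the designated term $\mu(v)$, and one must check that the designated choice is genuinely extremal — i.e. that replacing a factor's $y_{k_b}$-part by its $y_{j_b}$-part strictly decreases the chosen order, taking into account that $f_{j_b}(x_{j_b})$ contributes lower-degree terms in $x_{j_b}$ as well. I would handle this by ordering $K[X_d,Y_d]$-monomials first by total $Y$-degree (constant, $=q$), then by the multiset of $y$-indices read in decreasing order, then by the $X_d$-exponent vector lexicographically with $x_1>\cdots>x_d$; under this order the greedy outermost-first selection picks the unique largest monomial, and nesting condition (i) guarantees that the $y$-index multisets coming from different selections within a single $\pi(v)$ are all distinct except for the maximal one, which appears with coefficient exactly the product of leading coefficients of the $f_{j_b}$, hence nonzero. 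Once this extremality and the injectivity of $\mu$ are in hand, linear independence — and therefore the lemma — follows by the standard leading-term argument.
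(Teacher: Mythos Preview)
Your strategy coincides with the paper's: after the spanning statement, both arguments pick out for each normal word $v$ the same monomial $\mu(v)=x_1^{a_1}\cdots x_d^{a_d}\prod_b x_{j_b}^{m_{j_b}}y_{k_b}$ as a distinguished term of $\pi(v)$ and reduce linear independence to the injectivity of $v\mapsto\mu(v)$. The paper simply uses the lexicographic order on $K[x_1,y_1,\ldots,x_d,y_d]$ with $x_1>y_1>\cdots>x_d>y_d$; under it the leading term of each factor $f_{j_b}(x_{j_b})y_{k_b}-f_{k_b}(x_{k_b})y_{j_b}$ is $x_{j_b}^{m_{j_b}}y_{k_b}$, and since this is a genuine monomial order the leading term of the product is the product of leading terms. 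Your ``no cancellation'' worry therefore evaporates without any three-tier order or outermost/innermost bookkeeping.

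The real gap in your proposal is the injectivity of $\mu$. Recovering the multiset $\{k_b\}$ from the $Y$-part of $\mu(v)$ is immediate, but ``then the nesting structure and hence the left endpoints'' is not a proof: from $\mu(v)$ one only sees the exponents $p_i=a_i+m_i\,|\{b:j_b=i\}|$, and you cannot split $p_i$ into $a_i$ and the left-endpoint multiplicity without already knowing which indices $i$ are interior to some interval. The paper closes this by an induction on $\deg_{Y_d}$ that peels off one factor at a time: let $k$ be the smallest index with $y_k$ present in $\mu(v)$ and let $i$ be the largest index $<k$ with $p_i\ge m_i$. Some $u_{jk}$ divides $v$, so $p_j\ge m_j$ and hence $j\le i$; if $j<i$ then condition~(ii) forces $a_i<m_i$, so $i$ must itself be a left endpoint of some interval $(i,k')$, and condition~(i) together with the minimality of $k$ forces $k'=k$. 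Thus $u_{ik}$ divides $v$, the quotient $v/u_{ik}$ is again a normal word, and the induction proceeds. This recovery mechanism is precisely the content your sketch is missing.
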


\begin{proof}
Let $v=x_1^{a_1}\cdots x_d^{a_d}u_{j_1k_1}\cdots u_{j_qk_q}\in K[X_d,U_d]$ be a normal word.
Then
\begin{equation}\label{image of normal word}
\pi(v)=x_1^{a_1}\cdots x_d^{a_d}\prod_{b=1}^q(f_{j_b}(x_{j_b})y_{k_b}-f_{k_b}(x_{k_b})y_{j_b}).
\end{equation}
We shall follow the proof of \cite[Step 3 in the proof of Theorem 5]{DML}.
We compare the monomials in $K[X_d,Y_d]=K[x_1,y_1,\ldots,x_d,y_d]$ lexicographically:
\[
x_1^{a_1}y_1^{b_1}\cdots x_d^{a_d}y_d^{b_d} > x_1^{a_1'}y_1^{b_1'}\cdots x_d^{a_d'}y_d^{b_d'},
\]
if $(a_1,b_1,\ldots,a_d,b_d)>(a_1',b_1',\ldots,a_d',b_d')$ if the usual lexicographic order.
If $j_b<k_b$, then the leading monomials of $f_{j_b}(x_{j_b})y_{k_b}$ and $f_{k_b}(x_{k_b})y_{j_b}$ are
$x_{j_b}^{m_{j_b}}y_{k_b}$ and $y_{j_b}x_{k_b}^{m_{k_b}}$. Hence $x_{j_b}^{m_{j_b}}y_{k_b}>y_{j_b}x_{k_b}^{m_{k_b}}$
and the leading monomial of (\ref{image of normal word}) is
\begin{equation}\label{leading monomial of pi v}
\text{lead}(\pi(v))=x_1^{a_1}\cdots x_d^{a_d}x_{j_1}^{m_{j_b}}y_{k_1}\cdots x_{j_1}^{m_{j_q}}y_{k_q}.
\end{equation}
We shall show that we can recover the normal word $v$ from the leading monomial $\text{lead}(\pi(v))$.
Hence the monomials $\text{lead}(\pi(v))$ are pairwise different and
the polynomials $\pi(v)$ are linearly independent.
By Lemma \ref{spanning set of algebra of constants} this implies that
$\pi(v)$ form a basis of the vector space $K[X_d,Y_d]^{\Delta}$.

It is sufficient to compare those $\pi(v)$ which are of the same degree in $Y_d$.
As in \cite{DML} we use induction on the degree with respect to $Y_d$.
Since $\deg_{U_d}(v)=\deg_{Y_d}(\text{lead}(\pi(v)))$,
if $\deg_{Y_d}(\text{lead}(\pi(v)))=0$, then $\deg_{U_d}(v)=0$ and $v$ coincides with $\text{lead}(\pi(v))$.
Now let $\deg_{Y_d}(\text{lead}(\pi(v)))>0$.
We rewrite (\ref{leading monomial of pi v}) in the form
\begin{equation}\label{canonical leader of pi v}
\text{lead}(\pi(v))=x_1^{p_1}\cdots x_k^{p_k}y_k^{q_k}x_{k+1}^{p_{k+1}}\cdots x_d^{p_d}y_d^{q_d},\quad q_k>0.
\end{equation}
The variable $y_k$ participates in $\text{lead}(\pi(v))$ because $u_{jk}$ participates in $v$ from (\ref{normal monomials})
for some $j<k$ and $p_j\geq m_j$. We choose the maximal $i$ such that $i<k$ and $p_i\geq m_i$.
Lemma \ref{spanning set of algebra of constants}
implies that $i$ does not belong to the ideal $(j,k)$. Hence $i=j$ and the factor $u_{jk}=u_{ik}$ of $v$ is uniquely determined by
the form (\ref{canonical leader of pi v}) of $\text{lead}(\pi(v))$. In this way
$v=u_{ik}v_1$ for some normal word $v_1\in K[X_d,U_d]$ and
\[
\text{lead}(\pi(v))=x_i^{m_i}y_k\text{lead}(\pi(v_1)).
\]
By inductive arguments $\text{lead}(\pi(v_1))$ determines $v_1$ and this completes the proof.
\end{proof}

The following theorem is the main result of the paper.

\begin{theorem}
Let $\Delta$ be the derivation of $K[X_d,Y_d]$ defined by
\[
\Delta=\sum_{i=1}^df_i(x_i)\frac{\partial}{\partial y_i},
\]
where $f_i(x_i)$ are polynomials of positive degree. Then

{\rm (i)} The algebra of constants $K[X_d,Y_d]^{\Delta}$ has the presentation
\[
K[X_d,Y_d]^{\Delta}\cong K[X_d,U_d]/(R,S),
\]
where
\[
U_d=\{u_{ij}\mid 1\leq i<j\leq d\}
\]
and the ideal $(R,S)$ is generated by $R$ and $S$ from {\rm (\ref{relations between U})}
and {\rm (\ref{relations between X and U})}.

{\rm (ii)} The set $R\cup S$ is a reduced Gr\"obner basis of the ideal $(R,S)$
with respect to the DILL order of $K[X_d,U_d]$.

{\rm (iii)} The basis of the vector space $K[X_d,Y_d]^{\Delta}$
consists of the images in $K[X_d,Y_d]^{\Delta}$
of the normal words $v$ from Lemma {\rm \ref{spanning set of algebra of constants}}.
\end{theorem}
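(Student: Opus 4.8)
The plan is to derive all three parts of the theorem from the lemmas already established, so very little genuinely new work remains. The three assertions are essentially bookkeeping consequences of Lemmas \ref{relations between generators}, \ref{spanning set of algebra of constants} and \ref{basis of normal elements}, together with the general theory of Gr\"obner bases for ideals in polynomial rings. I would organize the argument so that (iii) is immediate, then use (iii) to prove (ii), and finally read off (i) from (ii).

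First, for (iii): this is literally the content of Lemma \ref{basis of normal elements}, so the only thing to do is restate it in the language of the theorem. Next, for the presentation and the Gr\"obner basis, let $I=(R,S)\subseteq K[X_d,U_d]$ be the ideal generated by the relations, and let $J=\ker\pi$ where $\pi$ is the epimorphism from (\ref{epimorphism pi}). By Lemma \ref{relations between generators} we have $I\subseteq J$. I would form the DILL leading-term ideal $\mathrm{in}(I)$; since the leading monomials of $R\cup S$ are the $u_{ik}u_{jl}$ and $x_j^{m_j}u_{ik}$ from (\ref{leading monomials of relations}), these generate a sub-ideal of $\mathrm{in}(I)$, and the monomials \emph{not} lying in the ideal generated by (\ref{leading monomials of relations}) are exactly the normal words (\ref{normal monomials}) of Lemma \ref{spanning set of algebra of constants}. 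Thus the images of the normal words span $K[X_d,U_d]/I$, and since by Lemma \ref{basis of normal elements} their images under $\pi$ are linearly independent in $K[X_d,Y_d]^{\Delta}\cong K[X_d,U_d]/J$, they are also linearly independent modulo $I$. Counting dimensions degree by degree (using the grading by degree in $Y_d$, equivalently in $U_d$, which $\pi$ respects), the surjection $K[X_d,U_d]/I\twoheadrightarrow K[X_d,U_d]/J$ is an isomorphism in each graded piece, hence $I=J$. This proves (i).

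Given $I=J$, part (ii) follows from the standard criterion: a generating set $G$ of an ideal $I$ is a Gr\"obner basis precisely when $\langle \mathrm{in}(g):g\in G\rangle=\mathrm{in}(I)$. The monomials outside $\langle \mathrm{in}(g):g\in R\cup S\rangle$ are the normal words, which we have just shown form a $K$-basis of $K[X_d,U_d]/I$; hence no normal word can lie in $\mathrm{in}(I)$, so $\langle \mathrm{in}(R\cup S)\rangle=\mathrm{in}(I)$ and $R\cup S$ is a Gr\"obner basis. For \emph{reducedness} I would check directly that the leading coefficients are $1$ (they are, by inspection of (\ref{relations between U}) and (\ref{relations between X and U})), that no leading monomial in (\ref{leading monomials of relations}) divides another — the $u_{ik}u_{jl}$ are pairwise non-dividing because they are squarefree of degree $2$ with distinct index sets, the $x_j^{m_j}u_{ik}$ likewise, and a quadratic $u$-monomial cannot divide $x_j^{m_j}u_{ik}$ nor conversely — and that no non-leading monomial of any $r(i,j,k,l)$ or $s(i,j,k)$ is divisible by a leading monomial of another element of $R\cup S$; the latter is a short finite check on the shapes $u_{ij}u_{kl}$, $u_{il}u_{jk}$ and $f_i(x_i)u_{jk}$, $f_k(x_k)u_{ij}$ appearing as tails.

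The only real obstacle is the dimension-count step establishing $I=J$, i.e.\ ruling out that the ideal of relations is strictly smaller than $\ker\pi$. Everything hinges on matching the spanning statement of Lemma \ref{spanning set of algebra of constants} with the linear-independence statement of Lemma \ref{basis of normal elements}: the first says the normal words span $K[X_d,U_d]/I$ (via their $\pi$-images spanning $K[X_d,Y_d]^{\Delta}$, but the same argument with $W=R\cup S$ shows they span $K[X_d,U_d]/I$ itself), the second says their $\pi$-images are independent; since $\pi$ factors through $K[X_d,U_d]/I$, independence downstairs forces independence of the normal words modulo $I$, and a spanning set that is linearly independent is a basis, whence the surjection $K[X_d,U_d]/I\to K[X_d,U_d]/J$ is injective. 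I would take care to phrase this using the common $Y_d$-degree grading so that all the vector spaces involved are finite-dimensional in each degree and the counting argument is rigorous. Apart from this, (ii) and the reducedness verification are routine, and (iii) is a restatement.
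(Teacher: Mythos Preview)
Your proposal is correct and follows essentially the same route as the paper: both arguments combine Lemma~\ref{relations between generators} (so $R\cup S\subseteq\ker\pi$), Lemma~\ref{spanning set of algebra of constants} (normal words span), and Lemma~\ref{basis of normal elements} (their images are independent) with the standard Gr\"obner-basis criterion to conclude simultaneously that $(R,S)=\ker\pi$, that $R\cup S$ is a Gr\"obner basis, and that the normal words give a vector-space basis. The paper compresses your $I=J$ discussion into the single sentence ``a subset of $\ker(\pi)$ is its Gr\"obner basis if and only if the set of normal words forms a basis of $K[X_d,U_d]/\ker(\pi)$'', and handles reducedness with the same direct divisibility check you outline; your remark about needing a grading for a dimension count is unnecessary, since (as you yourself note afterward) the spanning-plus-independence argument already forces injectivity without any finiteness.
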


\begin{proof}
By Lemma \ref{relations between generators} $R\cup S$ belongs to the kernel of the epimorphism $\pi$
from (\ref{epimorphism pi}). A subset of the ideal $\ker(\pi)$ is its Gr\"obner basis if and only if
the set of normal words forms a basis of the factor algebra $K[X_d,U_d]/\ker(\pi)\cong K[X_d,Y_d]^{\Delta}$.
Hence the statements (i), (ii), and (iii) follow immediately from Lemma \ref{basis of normal elements}.
Since the leading terms of the relations $r(i,j,k,l)\in R$ and $s(i,j,k)\in S$ with respect to the DILL order do not divide
the monomials participating in the other relations in $R$ and $S$, we conclude that the Gr\"obner basis $R\cup S$ is reduced.
\end{proof}

\end{document}